\theoremstyle{plain}
\newtheorem{thm}{Theorem}[section]
\newtheorem{cor}[thm]{Corollary}
\newtheorem{lem}[thm]{Lemma}
\newtheorem{prop}[thm]{Proposition}
\theoremstyle{definition}
\newtheorem{defn}{Definition}[section]
\theoremstyle{remark}
\theoremstyle{Example}
\theoremstyle{Open Question}
\theoremstyle{Conjecture}
\numberwithin{equation}{section}
\begin{document}
\title[
The Generalized Randic Index 
]
{
On Clique Version of the Randic Index
}
\author{Hossein Teimoori Faal}

\address
{
Department of Mathematics and Computer Science,\\
Allameh Tabataba’i University, Tehran, Iran
}


\maketitle

\begin{abstract}

In this paper, we first review the weighted-version 
of the handshaking lemma based on the idea of a weighted 
vertex-edge incidence matrix of a given graph $G$.
Then, we obtain a generalized version of the 
handshaking lemma based on the concept of the clique value. 
We also define a generalized version of Randic index. More importantly, we prove an upper bound for the generalized Randic index of a graph $G$. We finally concluse the paper with some disscussions about possible future works.

\end{abstract}

\section{Introduction and Motivation}

One of the important 
parameters of a simple, finite graph is the \emph{degree} 
of a vertex. It simply reflects a topological property of a graph which is the size of an \emph{open neighborhood}
of a given vertex. Therefore, one interesting line of research in graph theory is to generalize this local concept and also seeking for its potential applications. 
\\
In \cite{Teimoori20}, the author of this paper has suggested an extension 
this concept to the value of an edge $e=\{u,v\}$
as the size of the intersection of the open neighborhoods of 
its end-vertices; that is 
$
val_{G}(e) = \vert N_{G}(u) \cap N_{G}(v) \vert 
$
.
He also has used this idea in \cite{Teimoori20} to find a new upper bound for the number of edges with respect to the number of triangles in any $K_{4}$-free graph. 
\\
The well-known $Randi\check{c}$ index $R(G)$ of a graph 
$
G
$
was introduced in 1975 by $Randi\check{c}$ \cite{Randic75}. More precisely, he has defined it by 

\begin{equation}
R(G) :=
\sum_{\{u,v\} \in E(G)}
\frac{1}{\sqrt{deg_{G}(u) deg_{G}(v)}}. 
\end{equation}

Indeed, in this paper we will call it the \emph{vertex-version} of $Randi\check{c}$ index. 
It seems that this parameter is very useful in 
\emph{mathemaical chemistry} and has been extensively investigated in the literature 
(see \cite{LiGutman06} and the references therein). 
It is worth to mention the following two classical results in the context of $Randi\check{c}$ index. 

\begin{thm}[Bollobas and Erdos \cite{BollobasErdos98}]
	
	For any connected graph $G$ with $n$ vertices, 
	$
	R(G) \geq \sqrt{n-1}
	$
	with equality if and only if $G \cong K_{1,n-1}$.

\end{thm}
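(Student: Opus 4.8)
The plan is to prove the inequality \emph{together with the equality characterization} by strong induction on the number of edges $m=|E(G)|$, the base case $m=0$ (i.e.\ $G\cong K_1$) being trivial since $R(K_1)=0=\sqrt{0}$. The inductive step splits according to whether or not $G$ has a vertex of degree $1$.

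Suppose first that $G$ has a pendant vertex $v$ whose unique neighbour is $u$, and set $d:=\deg_G(u)$; if $d=1$ then $G\cong K_2$ and $R(G)=1=\sqrt{n-1}$, so assume $d\ge 2$. Put $G':=G-v$, a connected graph on $n-1$ vertices with $m-1$ edges, in which $\deg_{G'}(u)=d-1$ while every other degree is unchanged. As the only edge--weights affected by deleting $v$ are those at $u$, one gets the exact identity
\[
R(G)-R(G')=\frac{1}{\sqrt{d}}+\Bigl(\frac{1}{\sqrt{d}}-\frac{1}{\sqrt{d-1}}\Bigr)\sum_{w\in N_G(u)\setminus\{v\}}\frac{1}{\sqrt{\deg_G(w)}}.
\]
The bracketed factor is negative and the sum, having $d-1$ terms each in $(0,1]$, is at most $d-1$; hence the second summand is at least $(d-1)\bigl(\tfrac1{\sqrt d}-\tfrac1{\sqrt{d-1}}\bigr)$, and a one--line simplification gives $R(G)-R(G')\ge\sqrt{d}-\sqrt{d-1}$. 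Feeding in the induction hypothesis $R(G')\ge\sqrt{n-2}$, the bound $d\le n-1$, and the fact that $t\mapsto\sqrt{t}-\sqrt{t-1}$ is decreasing,
\[
R(G)\ \ge\ \sqrt{n-2}+\sqrt{d}-\sqrt{d-1}\ \ge\ \sqrt{n-2}+\sqrt{n-1}-\sqrt{n-2}\ =\ \sqrt{n-1}.
\]
Equality throughout forces every neighbour of $u$ other than $v$ to be a leaf, $d=n-1$, and $R(G')=\sqrt{n-2}$; by the inductive equality case $G'\cong K_{1,n-2}$ with $u$ at its centre, and reattaching $v$ at $u$ yields $G\cong K_{1,n-1}$.

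The case $\delta(G)\ge 2$ is the one I expect to be the main obstacle. Here $G$ is not a star and contains a cycle, so some edge $e$ of $G$ is not a bridge; the idea is to delete such a cycle--edge $e$ \emph{without increasing} $R$ and then apply the induction hypothesis to the connected graph $G-e$ (which has $m-1$ edges and $n$ vertices), getting $R(G)\ge R(G-e)\ge\sqrt{n-1}$, with equality excluded by a short separate check. The delicate point is that $R$ is \emph{not} monotone under edge deletion: removing $e=\{x,y\}$ inflates the weight of every other edge at $x$ (and at $y$) by $\tfrac{1}{\sqrt{\deg_G(w)}}\bigl(\tfrac1{\sqrt{\deg_G(x)-1}}-\tfrac1{\sqrt{\deg_G(x)}}\bigr)$ for a neighbour $w$ of $x$. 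So $e$ has to be chosen carefully — a natural candidate being an edge of a shortest cycle, whose endpoints have small degree and hence few edges to inflate — and one must verify that the gain $1/\sqrt{\deg_G(x)\deg_G(y)}$ dominates the total inflation, at most $\tfrac1{\sqrt2}\bigl(\sqrt{\deg_G(x)-1}-\tfrac{\deg_G(x)-1}{\sqrt{\deg_G(x)}}\bigr)$ from $x$'s side and similarly from $y$'s. Pushing this through in general seems to require a short subcase analysis (e.g.\ when $G$ has a vertex of degree exactly $2$ it is cleaner to \emph{suppress} it, splitting further on whether its two neighbours are already adjacent), and this bookkeeping is where the real work lies.

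A way to try to sidestep the edge--deletion argument is a single Cauchy--Schwarz step: expanding the square and using the handshake identity, $\sum_{\{u,v\}\in E}\bigl(\tfrac1{\sqrt{\deg(u)}}+\tfrac1{\sqrt{\deg(v)}}\bigr)^2=n+2R(G)$, and bounding the left side below by $\tfrac1m\bigl(\sum_{\{u,v\}\in E}(\tfrac1{\sqrt{\deg(u)}}+\tfrac1{\sqrt{\deg(v)}})\bigr)^2=\tfrac1m\bigl(\sum_v\sqrt{\deg(v)}\bigr)^2$, one obtains $R(G)\ge\tfrac1{2m}\bigl(\sum_v\sqrt{\deg(v)}\bigr)^2-\tfrac n2$. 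Since $n+2\sqrt{n-1}=(1+\sqrt{n-1})^2$, the theorem would follow from the bound $\sum_v\sqrt{\deg(v)}\ge\sqrt{m}\,(1+\sqrt{n-1})$, which is sharp exactly at the star. For \emph{trees} this is easy — minimising the concave functional $d\mapsto\sum_v\sqrt{d_v}$ over the polytope $\{\,d:\sum_v d_v=2(n-1),\ 1\le d_v\le n-1\,\}$ places the minimum at a vertex, and every vertex of that polytope is a permutation of the star degree sequence — but for graphs with cycles this inequality can fail on non-graphical degree sequences (take two vertices of degree $n-1$ and all others of degree $1$), so one must still feed in the fact that the degree sequence comes from a \emph{connected} graph. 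In short, the same obstacle reappears, now as a graphicality constraint.
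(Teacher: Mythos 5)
Your pendant-vertex case is correct and complete: the identity for $R(G)-R(G-v)$, the lower bound $R(G)-R(G-v)\ge\sqrt{d}-\sqrt{d-1}$, the monotonicity of $t\mapsto\sqrt{t}-\sqrt{t-1}$, and the equality analysis all check out. But the theorem is not proved, because the case $\delta(G)\ge 2$ --- which you yourself flag as ``where the real work lies'' --- is left as a plan rather than an argument, and it is precisely the hard half of the statement. The step you would need, namely that some cycle edge $e$ satisfies $R(G)\ge R(G-e)$, is never established for any concrete choice of $e$, and your proposed heuristic (take an edge of a shortest cycle, ``whose endpoints have small degree'') fails: in the book graph obtained from $K_{2,a-1}$ by joining the two vertices $x,y$ of degree $a$, every shortest cycle is a triangle containing the edge $xy$, both of whose endpoints have large degree, and a direct computation gives $R(G)=\tfrac1a+\tfrac{2(a-1)}{\sqrt{2a}}<\sqrt{2(a-1)}=R(G-xy)$ for $a\ge 3$, so deleting that cycle edge \emph{increases} $R$. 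Hence the existence of a ``good'' edge in every graph with $\delta\ge2$ requires a genuine proof with a specified selection rule, and none is given. Your Cauchy--Schwarz fallback reduces the theorem to $\sum_v\sqrt{\deg_G(v)}\ge\sqrt{m}\,\bigl(1+\sqrt{n-1}\bigr)$, which, as you correctly observe, cannot be derived from the degree sequence alone, so it does not close the gap either.

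For what it is worth, the paper offers no proof of this statement --- it is quoted from Bollob\'as and Erd\H{o}s --- so there is nothing internal to compare against; but the classical argument runs an induction on $n$ by deleting a vertex of \emph{minimum} degree $\delta$ (not only $\delta=1$) and establishing $R(G)-R(G-v)\ge\sqrt{n-1}-\sqrt{n-2}$ through estimates that cover exactly the regime your proposal omits, including the bookkeeping for neighbours of $v$ that are adjacent to one another. Until your $\delta\ge2$ case is carried out along such lines, the proposal is an incomplete proof of the inequality, and a fortiori of the equality characterization.
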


\begin{thm}[Fajtlowicz \cite{Fajtlowicz22}]\label{keyR1}
	
	For a graph $G$ with $n$ vertices, 
	$
	R(G) \leq \frac{n}{2}
	$
	with equality if and only if each component of 
	$G$ has alt least two vertices and is regular. 
	
\end{thm}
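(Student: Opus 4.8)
The plan is to dominate each summand of $R(G)$ by an arithmetic mean and then collapse the resulting sum by a handshaking-type double count of vertex--edge incidences; this is precisely the handshaking lemma applied with the edge--vertex weight $w(v,e)=1/\deg_G(v)$, so it fits the weighted-incidence viewpoint that the paper develops.

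First I would fix an edge $e=\{u,v\}\in E(G)$ and apply the arithmetic--geometric mean inequality to the positive reals $1/\deg_G(u)$ and $1/\deg_G(v)$:
\begin{equation}\label{eq:amgm}
\frac{1}{\sqrt{\deg_G(u)\,\deg_G(v)}}\;\le\;\frac{1}{2}\left(\frac{1}{\deg_G(u)}+\frac{1}{\deg_G(v)}\right),
\end{equation}
with equality if and only if $\deg_G(u)=\deg_G(v)$. Summing \eqref{eq:amgm} over all edges and interchanging the order of summation, I would obtain
\begin{equation}\label{eq:sum}
R(G)\;\le\;\frac{1}{2}\sum_{\{u,v\}\in E(G)}\left(\frac{1}{\deg_G(u)}+\frac{1}{\deg_G(v)}\right)\;=\;\frac{1}{2}\,\abs{\set{v\in V(G):\deg_G(v)\ge 1}}\;\le\;\frac{n}{2},
\end{equation}
where the middle equality holds because a vertex $v$ with $\deg_G(v)\ge1$ lies on exactly $\deg_G(v)$ edges and so contributes $\deg_G(v)\cdot\frac{1}{\deg_G(v)}=1$ to the sum, while an isolated vertex contributes nothing.

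For the equality characterization I would observe that $R(G)=n/2$ forces equality in both inequalities of \eqref{eq:sum}. Equality in the last step says $G$ has no isolated vertex, i.e.\ every component has at least two vertices; equality in \eqref{eq:amgm} on every edge says $\deg_G(u)=\deg_G(v)$ whenever $u\sim v$, and propagating this along walks inside a connected component shows the degree function is constant there, i.e.\ every component is regular. Conversely, if each component $C$ is $r_C$-regular with $\abs{V(C)}\ge2$ (hence $r_C\ge1$), then $R(C)=\abs{E(C)}/r_C=\abs{V(C)}/2$, and summing over components yields $R(G)=n/2$. The only delicate point is the bookkeeping around isolated vertices — exactly the feature that separates the bound from the equality case — together with the routine verification that ``adjacent vertices have equal degree'' upgrades, by connectedness, to ``each component is regular''; everything else is a single application of AM--GM plus counting of vertex--edge incidences.
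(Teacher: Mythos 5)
Your proposal is correct and follows essentially the same route as the paper: the bound $\frac{1}{\sqrt{\deg_G(u)\deg_G(v)}}\le\frac12\bigl(\frac{1}{\deg_G(u)}+\frac{1}{\deg_G(v)}\bigr)$ per edge is the paper's geometric--harmonic mean step, and your incidence count $\sum_{uv\in E(G)}\bigl(\frac{1}{\deg_G(u)}+\frac{1}{\deg_G(v)}\bigr)=n-n_0$ is exactly the paper's instance of the weighted handshaking lemma with $f(v)=1/\deg_G(v)$ on non-isolated vertices. Your treatment of the equality case is in fact more complete than the paper's, which omits that verification.
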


Our main goal in this paper is to obtain an extension of 
Theorem \ref{keyR1}, based on the idea of 
the value of a clique.

\section{Basic Definitions and Notations}

We assume that our graphs are simple, finite and undirected. 
For a given graph $G=(V,E)$ and a vertex 
$v \in V(G)$, the set of vertices adjacent 
to $v$ is called the \emph{open neighborhood}
of $v$ in $G$ and will be denoted by 
$
N_{G}(v)
$
.

The cardinality of $N_{G}(v)$ is called the 
\emph{degree} of $v$ and is denoted by $\deg_{G}(v)$.
A complete subgraph of $G$ is called a 
\emph{clique} of $G$. A clique on $k$ vertices is 
called a $k$-clique. A clique on $3$ vertices is called 
a triangle. We will denote the set of triangles of 
$G$ by $T(G)$. We denote the set of all $k$-cliques 
of $G$ by 
$
\Delta_{k}(G)
$
.
The number of $k$-cliques of $G$ is denoted 
by $c_{k}(G)$.  
We also recall that the well-known 
\emph{Cauchy-Schwartz} is the following inequality.  

\begin{lem}\label{GHMIneq} 
	[Geometric-Harmonic Mean Inequality]
	For any real sequences $\{a_{k}\}_{k\geq 1}$,
	we have 
	
	\begin{equation}
	\sqrt[k]{ a_{1} a_{2} \cdots a_{k} }
	\geq 
	\frac{k}
	{
		\frac{1}{a_{1}} + \frac{1}{a_{2}} + \cdots + \frac{1}{a_{k}}
	}
	,
	\end{equation}	
	with equality whenever $a_{1} = a_{2} = \cdots = a_{k}$. 
	
\end{lem}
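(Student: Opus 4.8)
The plan is to deduce this inequality from the classical arithmetic--geometric mean (AM--GM) inequality by passing to reciprocals. The first thing I would do is make explicit the hypothesis that is actually needed: the terms $a_{1},\dots,a_{k}$ must be \emph{positive} reals, since otherwise neither the $k$-th root on the left nor the harmonic mean on the right need be defined (the displayed inequality is really the geometric--harmonic mean inequality). Granting positivity, set $b_{i} := 1/a_{i} > 0$ for $1 \le i \le k$.

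Next I would apply AM--GM to the positive numbers $b_{1},\dots,b_{k}$:
\begin{equation*}
\frac{b_{1} + b_{2} + \cdots + b_{k}}{k} \;\geq\; \sqrt[k]{\,b_{1} b_{2} \cdots b_{k}\,},
\end{equation*}
and then rewrite both sides in terms of the original $a_{i}$. Since $b_{1} b_{2}\cdots b_{k} = 1/(a_{1} a_{2}\cdots a_{k})$, the right-hand side is $1/\sqrt[k]{a_{1} a_{2}\cdots a_{k}}$, while the left-hand side is $\bigl(\tfrac{1}{a_{1}} + \cdots + \tfrac{1}{a_{k}}\bigr)/k$. Both sides are positive, so taking reciprocals reverses the inequality and produces exactly
\begin{equation*}
\sqrt[k]{\,a_{1} a_{2} \cdots a_{k}\,} \;\geq\; \frac{k}{\frac{1}{a_{1}} + \frac{1}{a_{2}} + \cdots + \frac{1}{a_{k}}},
\end{equation*}
which is the claim.

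For self-containedness I would also recall a one-line justification of AM--GM itself: since $t \mapsto \ln t$ is concave on $(0,\infty)$, Jensen's inequality gives $\ln\bigl(\tfrac{1}{k}\sum_{i} b_{i}\bigr) \ge \tfrac{1}{k}\sum_{i}\ln b_{i} = \ln\bigl(b_{1}\cdots b_{k}\bigr)^{1/k}$, and exponentiating yields AM--GM; Cauchy's forward--backward induction is an equally good alternative. For the equality clause it suffices to check the stated direction: if $a_{1} = \cdots = a_{k} = a$ then the left side is $a$ and the right side is $k/(k/a) = a$, so equality holds. (If one also wanted the converse, strict concavity of $\ln$ forces equality in Jensen precisely when all $b_{i}$, hence all $a_{i}$, coincide.) I do not expect any genuine obstacle here; the only point requiring care is the implicit positivity assumption, and, should the ``only if'' direction be wanted, tracking the equality case cleanly through the reciprocal substitution.
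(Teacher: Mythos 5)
Your proposal is correct. The paper itself gives no proof of this lemma --- it is merely recalled as a classical fact (and, incidentally, mislabeled in the surrounding text as ``Cauchy--Schwartz'') --- so there is no argument in the paper to compare against; your derivation via AM--GM applied to the reciprocals $b_i = 1/a_i$, followed by taking reciprocals of both (positive) sides, is the standard route and is carried out cleanly. You are also right to flag that the statement as printed is defective: ``any real sequences'' must be read as ``positive reals,'' since otherwise the $k$-th root and the harmonic mean need not be defined and the inequality can fail; and the equality clause as stated is only the trivial ``if'' direction, which you verify directly, with the ``only if'' direction following from strict concavity of $\ln$ exactly as you indicate.
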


In what follows, we quickly review the 
weighted-version of the well-known handshaking lemma 
and one of its consequences which is known 
as Mantel's theorem for triangle-free graphs. 
\\
The \emph{weighted-version} of the well-known 
\emph{handshaking lemma} can be read, as follows. 
From now on, we will denote the set of non-negative 
real numbers with $\mathbb{R}^{+}$. 

\begin{lem}
	[Weighted Handshaking Lemma \cite{Wu14}]
	Let $G=(V,E)$ be a graph and 
	$f:~V(G) \mapsto \mathbb{R}^{+}$ be a non-negative 
	\emph{weight} function. Then, we have 
	
	\begin{equation}\label{verthand1}
	\sum_{v \in V(G)} f(v) \deg_{G}(v) =
	\sum_{e=uv \in E(G)}
	\Big(
	f(u) + f(v)
	\Big).
	\end{equation}
	
	In particular, we have
	
	\begin{equation}\label{KeyIdent1}
	\sum_{v \in V(G)} \deg^{2}_{G}(v) =
	\sum_{e=uv \in E(G)}
	\Big(
	\deg_{G}(u) + \deg_{G}(v)
	\Big).
	\end{equation}
	
\end{lem}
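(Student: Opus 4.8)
The plan is to prove \eqref{verthand1} by a weighted double counting over the vertex--edge incidences of $G$, following exactly the weighted vertex--edge incidence matrix suggested in the abstract. First I would introduce the matrix $M=(M_{v,e})$ indexed by $V(G)\times E(G)$ with $M_{v,e}=f(v)$ if $v$ is an endpoint of $e$, and $M_{v,e}=0$ otherwise; equivalently, $M$ is the ordinary $0/1$ vertex--edge incidence matrix with the row of each vertex $v$ rescaled by $f(v)$. The strategy is then to evaluate the total sum of all entries of $M$,
\[
\sigma:=\sum_{v\in V(G)}\ \sum_{e\in E(G)} M_{v,e},
\]
in the two natural orders of summation and to compare the results.

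Summing first along rows: for a fixed vertex $v$, the nonzero entries of its row correspond precisely to the edges incident with $v$, there being $\deg_{G}(v)$ of them, and each such entry equals $f(v)$; hence the $v$-th row sum is $f(v)\deg_{G}(v)$ and $\sigma=\sum_{v\in V(G)} f(v)\deg_{G}(v)$. Summing first along columns: for a fixed edge $e$, since $G$ is simple (in particular loopless), $e$ has exactly two distinct endpoints $u$ and $v$, so its column has exactly the two nonzero entries $f(u)$ and $f(v)$; hence the $e$-th column sum is $f(u)+f(v)$ and $\sigma=\sum_{e=uv\in E(G)}\bigl(f(u)+f(v)\bigr)$. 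Equating the two expressions for $\sigma$ yields \eqref{verthand1}. Finally, specializing to the weight $f(v)=\deg_{G}(v)$ for every $v\in V(G)$ turns \eqref{verthand1} into \eqref{KeyIdent1}, and the choice $f\equiv 1$ recovers the classical handshaking identity $\sum_{v}\deg_{G}(v)=2\lvert E(G)\rvert$ as a sanity check.

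I do not expect a genuine obstacle here: once the incidence matrix is set up, the result is a one-line Fubini-type interchange of summation. The only point needing a moment's care is the column count in the second evaluation, where the simple, loopless structure of $G$ is what guarantees that each edge contributes exactly the two terms $f(u)$ and $f(v)$ (a loop at $v$ would instead contribute $2f(v)$, forcing the usual degree convention), so I would make the simplicity hypothesis explicit at precisely that step. An equivalent presentation avoiding matrices altogether would simply sum the quantity $f(v)$ over all incident pairs $(v,e)$ with $v\in e$, grouping once by $v$ and once by $e$; I would choose whichever formulation reads more cleanly alongside the clique-value generalization developed later in the paper.
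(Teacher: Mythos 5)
Your proof is correct and matches the paper's approach: the paper cites \cite{Wu14} for this lemma rather than proving it, but its proof of the Weighted Clique Handshaking Lemma uses exactly your weighted incidence-matrix double counting (row sums versus column sums), of which your argument is the $k=1$ case. Nothing further is needed.
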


\section{A vertex-version of Randic index}

In chemical graph theory literature, the \emph{branching index} of a given graph
$
G
$
is known as the 
$Randi\check{c}$ index of the graph $G$ denoted by $R(G)$. Here, we will denote it 
by $R_{v}(G)$ and call it \emph{vertex-version} $Randi\check{c}$ index of $G$. It is 
defined, as follows 
 
\begin{equation}
R_{v}(G) = 
\sum_{e=uv \in E(G)} 
\frac{1}
{\sqrt{\deg_{G}(u) \deg_{G}(v)}  }
.
\end{equation} 

In \cite{Wu14}, the following result is proved by a simple argument based on the 
\emph{weighted version of handshaking} lemma and 
\emph{geometric-harmonic mean} inequality.

\begin{thm}[\cite{Fajtlowicz22}]\label{KeyR1}
	
	For a graph $G$ of order $n$, 
	\begin{equation*}
	R_{v}(G) \leq \frac{n}{2},
	\end{equation*}	
	with equality if and only if every component of $G$ is regular and $G$ has no isolated vertices. 
	
\end{thm}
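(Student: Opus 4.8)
The plan is to combine the weighted handshaking lemma with the geometric–harmonic mean inequality of Lemma~\ref{GHMIneq}, exactly in the spirit of the argument from \cite{Wu14}. First I would dispose of isolated vertices: an isolated vertex contributes nothing to the edge sum defining $R_{v}(G)$, so if $G$ has an isolated vertex I may delete it without changing $R_{v}(G)$. Thus it suffices to prove the bound for the graph $G'$ obtained by removing all isolated vertices, and if $G'$ has $n' < n$ vertices then the resulting inequality $R_{v}(G) = R_{v}(G') \le n'/2 < n/2$ is already strict. Hence I may henceforth assume $G$ has minimum degree at least $1$, which is precisely what is needed so that the weight function $f(v) := 1/\deg_{G}(v)$ is well defined and lies in $\mathbb{R}^{+}$.

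Next I would apply Lemma~\ref{GHMIneq} with $k=2$ to $a_{1} = \deg_{G}(u)$ and $a_{2} = \deg_{G}(v)$ for each edge $e = uv$, obtaining
\begin{equation*}
\frac{1}{\sqrt{\deg_{G}(u)\,\deg_{G}(v)}} \;\le\; \frac{1}{2}\left(\frac{1}{\deg_{G}(u)} + \frac{1}{\deg_{G}(v)}\right),
\end{equation*}
with equality if and only if $\deg_{G}(u) = \deg_{G}(v)$. Summing over all edges and then invoking the weighted handshaking lemma \eqref{verthand1} with the weight $f$ above gives
\begin{equation*}
R_{v}(G) \;\le\; \frac{1}{2}\sum_{e=uv \in E(G)} \big(f(u)+f(v)\big) \;=\; \frac{1}{2}\sum_{v\in V(G)} f(v)\,\deg_{G}(v) \;=\; \frac{1}{2}\sum_{v \in V(G)} 1 \;=\; \frac{n}{2}.
\end{equation*}

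Finally I would analyze the equality case. Equality throughout forces equality in the geometric–harmonic mean step for every edge, i.e. $\deg_{G}(u) = \deg_{G}(v)$ whenever $uv \in E(G)$; since any two vertices in a common component are joined by a path, this forces $\deg_{G}$ to be constant on each connected component, so every component is regular. Conversely, if every component is $r$-regular with $r \ge 1$, each of the $mr/2$ edges of a component on $m$ vertices contributes $1/r$, for a total of $m/2$ per component, and summing over components recovers $n/2$; together with the first-step reduction (which shows that an isolated vertex makes the bound strict) this yields the stated characterization. I do not anticipate a real obstacle here: the only delicate point is the bookkeeping around isolated vertices and the legitimacy of the weight $1/\deg_{G}(v)$, which is exactly why that reduction is carried out first.
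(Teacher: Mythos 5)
Your proposal is correct and follows essentially the same route as the paper: the weight $f(v)=1/\deg_G(v)$ in the weighted handshaking lemma combined with the geometric--harmonic mean inequality for $k=2$ applied edgewise. The only cosmetic difference is that you delete isolated vertices up front where the paper assigns them weight $1$ to get the identity $\sum_{uv\in E(G)}\bigl(1/\deg_G(u)+1/\deg_G(v)\bigr)=n-n_0$, and you carry out the equality analysis in more detail than the paper does.
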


For the sake of completeness, here we also provide a short proof of 
Theorem \ref{KeyR1} based on the reference \cite{Wu14}. 

\begin{proof}
	
First, we note that by defining the function $f$ in the equation (\ref{verthand1}) by 
	
	\[ 
	\big(
	f(v) 
	= 
	\begin{cases}
	\frac{1}{\deg_{G}(v)} & \textnormal{if}~ 
	\deg_{G}(v) > 0
	, \\
	1 & \textnormal{if}~ \deg_{G}(v) = 0, 
	\end{cases} 
	\]  	
	we obtain
	
\begin{equation}\label{keyIdent1}
	\sum_{uv \in E(G)}
	\Big(
	\frac{1}{\deg_{G}(u)} + \frac{1}{\deg_{G}(v)} 
	\Big)
	=
	n - n_{0},
\end{equation}
	
where $n_{0}$ denotes the number of isolated vertices in $G$. 
\\
Now considering the \emph{geometric-harmonic} inequality for $k=2$, Lemma \ref{GHMIneq}, we have 
	
\begin{eqnarray}
R_{v}(G)
& = &   
\sum_{e=uv \in E(G)} 
\frac{1}
{\sqrt{\deg_{G}(u) \deg_{G}(v)} } \\ \nonumber 
& \leq &   
	\sum_{uv \in E(G)}
	\frac{1}{2}
\big(
	\frac{1}{\deg_{G}(u)} + \frac{1}{\deg_{G}(v)} 
	\big) \\ \nonumber 
& = &
	\frac{n - n_{0}}{2}
	\leq 
	\frac{n}{2},
\end{eqnarray}
as required.

\end{proof}

\section{An edge-version of Randic index}

In this section, we aim to obtain an edge version  
of the upper bound for the $Randi\check{c}$ index, based on 
the new concept of the value of an edge.    

\begin{defn}
	
	Let $G=(V,E)$ be a graph and $e=uv$ be an 
	edge of $G$. Then, we define the \emph{edge value}
	of $e$, denoted by $val_{G}(e)$, as follows
	
	\begin{equation}
	val_{G}(e) = \vert N_{G}(e) \vert = \vert N_{G}(u) \cap N_{G}(v) \vert. 
	\end{equation}		
	Here, $N_{G}(e)$ denotes the set of common neighbors 
	of end vertices of the edge $e$. 		
	
\end{defn}

Next, we generalize the weighted handshaking lemma 
for values of edges of a given graph. 

\begin{lem}
	
	[Weighted Edge Handshaking Lemma]
	Let $G=(V,E)$ be a graph and 
	$g:~E(G) \mapsto \mathbb{R}^{+}$ be a non-negative 
	\emph{weight} function. Then, we have 
	
	\begin{equation}\label{edgehand1}
	\sum_{e \in E(G)} g(e) val_{G}(e) =
	\sum_{\delta=e_{1}e_{2}e_{3} \in T(G)}
	\Big(
	g(e_{1}) + g(e_{2}) + g(e_{3}) 
	\Big).
	\end{equation}
	
	In particular, we have
	
	\begin{equation}\label{KeyIdent2}
	\sum_{e \in E(G)} val^{2}_{G}(e) =
	\sum_{\delta=e_{1}e_{2}e_{3} \in T(G)}
	\Big(
	val_{G}(e_{1}) + val_{G}(e_{2})+ val_{G}(e_{3})
	\Big).
	\end{equation}

\end{lem}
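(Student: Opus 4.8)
The plan is to prove the weighted edge handshaking lemma by a double-counting argument on incidences between edges and triangles, exactly mirroring the proof of the vertex version in \eqref{verthand1}. The key observation is that $val_G(e)$ counts something combinatorially meaningful: if $e = uv$, then each common neighbor $w \in N_G(u) \cap N_G(v)$ produces a triangle $\delta = uvw \in T(G)$ containing $e$ as one of its three edges, and conversely every triangle containing $e$ arises this way. Hence $val_G(e)$ equals the number of triangles of $G$ that contain the edge $e$. This is the identity that does all the work.

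First I would make that observation precise: define the incidence set $I = \{(e,\delta) \in E(G) \times T(G) : e \text{ is an edge of } \delta\}$, and note that for a fixed edge $e$ the number of pairs $(e,\delta) \in I$ is exactly $val_G(e)$, while for a fixed triangle $\delta = e_1 e_2 e_3$ there are exactly three pairs $(e_i,\delta) \in I$. Then I would compute the weighted sum $\sum_{(e,\delta) \in I} g(e)$ in two ways. Summing over edges first gives $\sum_{e \in E(G)} g(e)\, val_G(e)$, since each edge $e$ contributes $g(e)$ once for each of its $val_G(e)$ triangles. Summing over triangles first gives $\sum_{\delta = e_1 e_2 e_3 \in T(G)} \big(g(e_1) + g(e_2) + g(e_3)\big)$, since each triangle contributes the weights of its three edges. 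Equating the two expressions yields \eqref{edgehand1}.

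For the particular case \eqref{KeyIdent2}, I would simply specialize by taking $g(e) = val_G(e)$, which is a legitimate non-negative weight function $g : E(G) \to \mathbb{R}^{+}$ since $val_G(e) \geq 0$. Substituting into \eqref{edgehand1} immediately gives $\sum_{e \in E(G)} val_G^2(e) = \sum_{\delta = e_1 e_2 e_3 \in T(G)} \big(val_G(e_1) + val_G(e_2) + val_G(e_3)\big)$, as claimed.

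I do not expect a genuine obstacle here; the only point requiring a little care is the bijection between common neighbors of the endpoints of $e$ and triangles through $e$ — one must check there is no overcounting (distinct common neighbors $w \neq w'$ give genuinely distinct triangles $uvw \neq uvw'$) and that we are counting unordered triangles consistently with the index set $T(G)$. Once that is nailed down, the double count is routine. It may also be worth remarking that this is the natural ``one level up'' analogue of the classical handshaking lemma, with the vertex–edge incidence structure replaced by the edge–triangle incidence structure, which motivates the terminology.
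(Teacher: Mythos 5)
Your proof is correct, and it is essentially the same argument the paper uses for the general Weighted Clique Handshaking Lemma (of which this is the $k=2$ case): there the double count is phrased as equating row sums and column sums of a weighted edge--triangle incidence matrix, which is exactly your incidence-set computation. The specialization $g = val_G$ for the second identity is also the intended step.
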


As an immediate consequence of the above lemma, we have the following interesting result. Recall that an edge $e \in E(G)$
is called \emph{isolated} if we have 
$
val_{G}(e) =0
$
.

\begin{cor}\label{keycor1}

	For any graph $G$ with $m$ edges, we have 
	
	\begin{equation}
	\sum_{\delta=e_{1} e_{2} e_{3} \in T(G)}
	\bigg(
	\frac{1}{val_{G}(e_{1})} + \frac{1}{val_{G}(e_{2})} + 
	\frac{1}{val_{G}(e_{3})}
	\bigg) =
	m - m_{0},
	\end{equation}	
	in which $m_{0}$ is the number of \emph{isolated edges}.

\end{cor}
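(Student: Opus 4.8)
The plan is to derive Corollary \ref{keycor1} as a direct specialization of the Weighted Edge Handshaking Lemma, exactly paralleling the argument used for Theorem \ref{KeyR1}. First I would choose the weight function $g: E(G) \mapsto \mathbb{R}^{+}$ in equation (\ref{edgehand1}) by setting $g(e) = 1/val_{G}(e)$ whenever $val_{G}(e) > 0$, and $g(e) = 1$ (or any fixed value) whenever $val_{G}(e) = 0$, i.e. on the isolated edges. This mirrors the vertex-version choice in the proof of Theorem \ref{KeyR1}, where $f(v) = 1/\deg_G(v)$ on non-isolated vertices.

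Next I would evaluate the left-hand side of (\ref{edgehand1}) with this $g$: for each non-isolated edge $e$ we get $g(e)\,val_{G}(e) = 1$, and for each isolated edge we get $g(e)\,val_{G}(e) = g(e)\cdot 0 = 0$, regardless of the value assigned there. Hence $\sum_{e \in E(G)} g(e)\,val_{G}(e)$ counts precisely the non-isolated edges, which is $m - m_{0}$. Substituting into (\ref{edgehand1}) gives
\begin{equation*}
m - m_{0} = \sum_{\delta = e_{1}e_{2}e_{3} \in T(G)} \Big( g(e_{1}) + g(e_{2}) + g(e_{3}) \Big).
\end{equation*}

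Finally I would observe that any edge $e_{i}$ lying in a triangle automatically has $val_{G}(e_{i}) \geq 1 > 0$ — the third vertex of the triangle is a common neighbor of its endpoints — so on the right-hand side the clause $g(e) = 1$ for isolated edges never actually triggers, and every term is of the form $1/val_{G}(e_{i})$. This yields exactly the claimed identity. There is essentially no obstacle here; the only point requiring a moment's care is the remark that edges appearing in $T(G)$ are never isolated, which legitimizes rewriting each $g(e_{i})$ as $1/val_{G}(e_{i})$ and makes the choice of $g$ on isolated edges irrelevant.
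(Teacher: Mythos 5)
Your proposal is correct and is exactly the argument the paper intends: the corollary is stated as an immediate consequence of the Weighted Edge Handshaking Lemma, and your choice of $g(e)=1/val_{G}(e)$ on non-isolated edges is the direct analogue of the weight $f(v)=1/\deg_{G}(v)$ used in the paper's proof of Theorem \ref{KeyR1}. Your added remark that edges lying in a triangle are never isolated (so the right-hand side is well defined and independent of the value of $g$ on isolated edges) is a worthwhile point of care that the paper leaves implicit.
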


Nect, we give a generalization of 
the result in \cite{Fajtlowicz22}. To do so, we first need to give a 
generalization of the concept of $Randi\check{c}$ index. 

\begin{defn}
	
	For a given graph $G=(V,E)$, the edge-version of 
	the $Randi\check{c}$ index, denoted by $R_{e}(G)$, is defined as 
	
	\begin{equation}
	R_{e}(G) :=
	\sum_{\delta=e_{1} e_{2} e_{3} \in T(G)} 
	\frac{1}
	{\sqrt{val_{G}(e_{1}) val_{G}(e_{2}) val_{G}(e_{3})  }  }
	\end{equation}
	
\end{defn}

From now on, we will call a graph \emph{edge-regular}
if $val_{G}(e)$ is the same for all edges of $G$.

\begin{thm}\label{KeyR2}
	
	For a graph $G$ of size $m$, we have 
	
	\begin{equation*}
	R_{e}(G) \leq \frac{m}{3},
	\end{equation*}	
	
with equality if and only if every component of $G$ is edge-regular and $G$ has no isolated edges. 
	
\end{thm}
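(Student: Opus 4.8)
The plan is to mimic exactly the proof of Theorem \ref{KeyR1}, replacing the vertex--edge incidence structure by the edge--triangle incidence structure captured in Corollary \ref{keycor1}. First I would apply the Geometric--Harmonic Mean Inequality (Lemma \ref{GHMIneq}) with $k=3$ to each triangle $\delta=e_1e_2e_3\in T(G)$, using the three positive reals $a_i=val_G(e_i)$; this requires that $val_G(e_i)>0$ for every edge lying in a triangle, which is automatic since an edge contained in a triangle has at least one common neighbour of its endpoints. This yields, term by term,
\begin{equation*}
\frac{1}{\sqrt[3]{val_G(e_1)val_G(e_2)val_G(e_3)}}
\le
\frac{1}{3}\left(\frac{1}{val_G(e_1)}+\frac{1}{val_G(e_2)}+\frac{1}{val_G(e_3)}\right).
\end{equation*}

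Next I would sum this inequality over all $\delta\in T(G)$. The left-hand side is precisely $R_e(G)$, while the right-hand side, by Corollary \ref{keycor1}, equals $\tfrac{1}{3}(m-m_0)$, where $m_0$ is the number of isolated edges. Since $m_0\ge 0$, this gives $R_e(G)\le \tfrac{1}{3}(m-m_0)\le \tfrac{m}{3}$, which is the claimed bound.

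For the equality characterization I would argue in two stages. Equality in the second inequality $\tfrac{1}{3}(m-m_0)\le\tfrac{m}{3}$ forces $m_0=0$, i.e.\ $G$ has no isolated edges. Equality in the first (summed) inequality forces equality in the GHM inequality for every triangle, hence $val_G(e_1)=val_G(e_2)=val_G(e_3)$ for each triangle $\delta\in T(G)$; I would then upgrade this local constancy to the statement that $val_G$ is constant on each connected component. The natural device here is the ``triangle graph'' whose vertices are the edges of $G$ with two joined when they lie in a common triangle: constancy of $val_G$ on every triangle means $val_G$ is constant on each connected component of this auxiliary graph, and one checks that—restricting to components of $G$ with at least one edge and no isolated edges—these components coincide with (the edge sets of) the components of $G$, giving edge-regularity of each component. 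Conversely, if every component of $G$ is edge-regular with no isolated edge, both inequalities become equalities, so the bound is attained.

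\textbf{Main obstacle.} The routine part is the GHM application and the summation; the delicate point is the equality analysis, specifically the passage from ``$val_G$ is constant on the three edges of each triangle'' to ``$val_G$ is constant on each component of $G$.'' This needs the observation that under the hypothesis ``no isolated edges,'' every edge lies in some triangle, so the triangle-adjacency relation on $E(G)$ has the same connected components as $G$ itself; I expect this is where a careful (though short) argument is required, and where one must be slightly careful about components that are single vertices or single edges. A secondary subtlety worth a sentence is noting that $val_G(e)>0$ for every edge in a triangle, which is what lets us invoke Lemma \ref{GHMIneq} legitimately.
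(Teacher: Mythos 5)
Your derivation of the inequality is essentially the paper's own proof: apply the mean inequality to each triangle and sum, using Corollary \ref{keycor1} to evaluate the right-hand side as $\tfrac{1}{3}(m-m_0)\le\tfrac{m}{3}$. One discrepancy you should not gloss over: Lemma \ref{GHMIneq} with $k=3$ gives $1/\sqrt[3]{val_G(e_1)val_G(e_2)val_G(e_3)}$ on the left, whereas $R_e(G)$ is defined with a \emph{square} root of the triple product, so after summing your left-hand side is not literally $R_e(G)$. Since every edge lying in a triangle has integer value at least $1$, one has $\sqrt{abc}\ge\sqrt[3]{abc}$ and hence $1/\sqrt{abc}\le 1/\sqrt[3]{abc}$, so the bound survives, but this extra step must be stated (the paper's proof silently makes the same identification).

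The genuine gap is in the equality analysis, and it sits exactly where you said the delicate point is. Constancy of $val_G$ on each triangle does not propagate along components of $G$: the triangle-adjacency components of $E(G)$ need not coincide with the components of $G$ even when every edge lies in a triangle, because two triangles can meet in a single vertex. Concretely, take a triangle and a $K_4$ sharing one vertex: the graph is connected, has no isolated edges, and every triangle has all three edge-values equal ($1$ on the pendant triangle, $2$ inside the $K_4$), yet the graph is not edge-regular. Under the cube-root reading this graph attains $R_e(G)=1+4\cdot\tfrac12=3=\tfrac{9}{3}$, so it defeats the stated equality characterization itself, not just your route to it. (Under the square-root definition as printed, equality instead forces every triple product to equal $1$, i.e.\ $val_G(e)=1$ for all edges, and then $K_4$ --- edge-regular with no isolated edges --- fails to attain equality, so the ``if'' direction breaks.) In short: your inequality argument matches the paper's; the equality clause cannot be established by the proposed component argument, and the paper's own proof simply omits it.
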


\begin{proof}

Considering Corollary \ref{keycor1} and arithmetic-geometric inequality, we have 	
	\begin{eqnarray}
	R_{e}(G) &:= &
	\sum_{\delta=e_{1} e_{2} e_{3} \in T(G)} 
	\frac{1}
	{\sqrt{val_{G}(e_{1}) val_{G}(e_{2}) val_{G}(e_{3})  } }
	\\ \nonumber 
	& \leq &
	\frac{1}{3}
	\sum_{\delta=e_{1} e_{2} e_{3} \in T(G)}
	\bigg(
	\frac{1}{val_{G}(e_{1})} + \frac{1}{val_{G}(e_{2})} + 
	\frac{1}{val_{G}(e_{3})}
	\bigg) \\ \nonumber 
	& = &
	\frac{1}{3}(m - m_{0})
	\leq 
	\frac{m}{3}.  
	\end{eqnarray}	
	
\end{proof}

\section{the clique handshaking lemma}

In this section, we attempt to find 
a more generalized version of $Randi\check{c}$ index which we call it \emph{the generalized}
$Randi\check{c}$ index. 
To this end, we first need to present 
an extension of the definition of the degree 
of vertex to the value of any clique of higher order.

\begin{defn}

Let 
$G=(V,E)$ be a graph and $q_{k} \in \Delta_{k}(G)$
	be a $k$-clique in $G$. Then, we define the value 
	of the clique 
	$q_{k}$ with the vertex set 
$V(q_{k})=v_{i_1}\cdots v_{i_k}$
denoted by $val_{G}(q_{k})$, as follows
	
	\begin{equation}
	val_{G}(q) = \bigg\vert \bigcap_{i=1}^{k} N_{G}(v_{j_{i}}) \bigg\vert . 
	\end{equation}
We will also call an $k$-clique $q_{k}$ with $val_{G}(q_{k}) = 0$, an \emph{isolated}
clique of $G$.	
\end{defn}

Note that any $k$-clique 
$q_{k}=v_{i_1}\cdots v_{i_k}\in \Delta_{k}(G)$
in $G$ can also 
be represented (uniquely) by 
$
q_{k}=q_{k-1,1}\cdots q_{k-1,k}
$
where for each $i=1,\ldots,k$ the symbol $q_{k-1,i}$
denotes a $(k-1)$-clique subgraph of $q_{k}$. We will use this 
fact in our next key lemma.

\begin{lem}
	[Weighted Clique Handshaking Lemma]
	Let $G=(V,E)$ be a graph and and let  
	$h:~\Delta_{k}(G) \mapsto \mathbb{R}^{+}$ 
	$(k\geq 2)$ 
	be a \emph{weight} function. Then, we have 
	
	\begin{equation}\label{cliquhand1}
	\sum_{q_{k} \in \Delta_{k}(G)} h(q_{k}) 
	val_{G}(q_{k}) =
	\sum_{q_{k+1}=q_{k,1}\cdots q_{k,k+1} \in 
		\Delta_{k+1}(G)}
	\Big(
	h(q_{k,1}) + \ldots + h(q_{k,k+1}) 
	\Big).
	\end{equation}
	
	In particular, we have
	
	\begin{equation}\label{KeyIdent3}
	\sum_{q_{k} \in \Delta_{k}(G)} val^{2}_{G}(q_{k}) =
	\sum_{q_{k+1}=q_{k,1}\cdots q_{k,k+1} 
		\in \Delta_{k+1}(G)}
	\Big(
	val_{G}(q_{k,1}) + \cdots + val_{G}(q_{k,k+1})
	\Big).
	\end{equation}

\end{lem}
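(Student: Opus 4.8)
The plan is to prove the identity by a double-counting argument over incident pairs $(q_k, q_{k+1})$, exactly mirroring the vertex- and edge-level handshaking lemmas already established. First I would set up the bipartite incidence structure: let $\mathcal{I}$ be the set of all ordered pairs $(q, Q)$ where $q \in \Delta_k(G)$, $Q \in \Delta_{k+1}(G)$, and $q$ is a $k$-clique subgraph of $Q$ (equivalently, $V(q) \subset V(Q)$ with $|V(Q) \setminus V(q)| = 1$). I would then compute $\sum_{(q,Q) \in \mathcal{I}} h(q)$ in two ways. Summing first over $Q$: each $(k+1)$-clique $Q$ has exactly $k+1$ subcliques of order $k$, namely the $q_{k,1}, \dots, q_{k,k+1}$ obtained by deleting one vertex at a time, so the contribution of a fixed $Q$ is $h(q_{k,1}) + \cdots + h(q_{k,k+1})$; summing over all $Q \in \Delta_{k+1}(G)$ gives the right-hand side of (\ref{cliquhand1}).

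The crux is the other order of summation. Fixing $q \in \Delta_k(G)$, I must count the $(k+1)$-cliques $Q$ containing $q$. Such a $Q$ is obtained by adjoining a single vertex $w \notin V(q)$ such that $V(q) \cup \{w\}$ induces a complete subgraph; since $V(q)$ is already a clique, this holds precisely when $w$ is adjacent to every vertex of $q$, i.e.\ $w \in \bigcap_{v \in V(q)} N_G(v)$. Hence the number of such $Q$ is exactly $val_G(q_k)$, and the inner sum contributes $h(q_k)\, val_G(q_k)$. Summing over $q \in \Delta_k(G)$ yields the left-hand side of (\ref{cliquhand1}), proving the identity. The special case (\ref{KeyIdent3}) then follows by taking $h = val_G$ on $\Delta_k(G)$; one only needs to observe that $val_G$ is indeed a function into $\mathbb{R}^+$, which is immediate since cardinalities are nonnegative integers.

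I expect the main obstacle to be purely bookkeeping rather than conceptual: one must be careful that the representation $q_{k+1} = q_{k,1} \cdots q_{k,k+1}$ from the paragraph preceding the lemma is used consistently, and that the correspondence ``vertex $w$ adjacent to all of $V(q)$'' $\leftrightarrow$ ``$(k+1)$-clique extending $q$'' is genuinely a bijection (injectivity is clear since $w$ is recovered as the unique vertex of $V(Q) \setminus V(q)$; surjectivity onto extensions is the computation above). A minor subtlety worth a sentence is that for $k = 2$ this specializes to the Weighted Edge Handshaking Lemma (\ref{edgehand1}) — with $\Delta_2(G) = E(G)$, $\Delta_3(G) = T(G)$, and $val_G$ on edges matching the earlier definition — so the lemma is a genuine common generalization and the proof schema is uniform in $k$.
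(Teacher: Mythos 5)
Your proposal is correct and is essentially the paper's own argument: the paper phrases the double count as summing the entries of a weighted subclique--superclique incidence matrix by rows versus by columns, which is exactly your count of incident pairs $(q_k,q_{k+1})$ in two orders. If anything, you justify the key fact that the number of $(k+1)$-cliques containing a fixed $k$-clique $q_k$ equals $val_G(q_k)$ more explicitly than the paper does.
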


\begin{proof}
	
	We proceed by defining the weighted 
	\emph{subclique-superclique}
	matrix 
	$I_{f,k}(G)$
	of order $k$, as follows

	\[ 
	\big(
	I_{f,k}(G)
	\big)_{q_{k},q_{k+1}} = 
	\begin{cases}
	h(q_{k}) & \textnormal{if}~ 
	q_{k}~\textnormal{is~a~subgraph~of}~
	q_{k+1}
	, \\
	0 & \textnormal{
		otherwise	
	}.
	\end{cases} 
	\]  	
	
	Next, we note that in the matrix $I_{f,k}(G)$
	each row corresponding to the clique $q_{k}$	
	has $val_{G}(q_{k})$ non-zero entries. Hence, 
	the resulting row-sum equals to 
	$
	h(q_{k})val_{G}(q_{k})
	$ 	
	.
	On the other hand, each column corresponding to 
	the clique $q_{k+1}=q_{k,1}\cdots q_{k,k+1}$ has
	the column-sum 
	$
	h(q_{k,1}) + \cdots + h(q_{k,k+1})
	$ 
	.
	Thus, by summing over all rows and columns 
	and equating them we get the desired result.

\end{proof}

\section{The Generalized Randic Index}

In this last section, we obtain a more generalized version of $Randi\check{c}$ index
which we call it the \emph{generalized $Randi\check{c}$ index}.  
\\
Next, we define the \emph{generalized $Randi\check{c}$ index} of a 
graph $G$ or a \emph{cliuqe-version} of the $Randi\check{c}$ index based on 
the new concept of the \emph{clique value} in graph theory. 

\begin{defn}
	
	Let $G=(V,E)$ be a graph. Then, the \emph{generalized $Randi\check{c}$ index} of $G$, denoted by $R_{cliq}(G)$, is defined by 
	
	\begin{equation}
	R_{clq}(G;k) :=
	\sum_{q_{k+1}=q_{k,1}\cdots q_{k,k+1} \in 
		\Delta_{k+1}(G)}
	\frac{1}
	{
		\sqrt{	
			\prod_{j=1}^{k+1} val_{G}(q_{k,j})
		}
	}, ~~~(k \geq 1)
	\end{equation}
	
\end{defn}

We first need the following key result. 

\begin{prop}\label{keyprop1}
	
	Let $G=(V,E)$ be a graph. Then, we have 
	
	\begin{equation}
	\sum_{q_{k+1}=q_{k,1}\cdots q_{k,k+1} \in 
		\Delta_{k+1}
	}	
	\bigg(
	\frac{1}{val_{G}(q_{k,1}) } + \cdots + \frac{1}{val_{G}(q_{k,k+1}) }
	\bigg)
	=
	c_{k}(G) - c_{k,0}(G),
	\end{equation}	
	in which $c_{k,0}(G)$ is the number of isolated 
	$k$-cliques of $G$. 	
\end{prop}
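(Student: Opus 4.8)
The plan is to mimic the proof of Theorem~\ref{KeyR1} and Corollary~\ref{keycor1}, but now applied one level higher, using the Weighted Clique Handshaking Lemma~(\ref{cliquhand1}) with a carefully chosen weight function $h$ on the set $\Delta_{k}(G)$ of $k$-cliques. First I would define
\[
h(q_{k}) =
\begin{cases}
\dfrac{1}{val_{G}(q_{k})} & \textnormal{if}~ val_{G}(q_{k}) > 0,\\[2mm]
1 & \textnormal{if}~ val_{G}(q_{k}) = 0,
\end{cases}
\]
so that $h(q_{k})\,val_{G}(q_{k}) = 1$ whenever $q_{k}$ is not an isolated $k$-clique, and $h(q_{k})\,val_{G}(q_{k}) = 0$ whenever it is. Then the left-hand side of (\ref{cliquhand1}) collapses to a count of the non-isolated $k$-cliques of $G$, namely $c_{k}(G) - c_{k,0}(G)$.

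The next step is to look at the right-hand side of (\ref{cliquhand1}). Each $(k+1)$-clique $q_{k+1}$ decomposes uniquely into its $k+1$ many $k$-clique facets $q_{k,1},\dots,q_{k,k+1}$, and crucially \emph{none} of these facets can be an isolated $k$-clique: since $q_{k+1}$ itself is a clique on $k+1$ vertices, each $k$-vertex facet $q_{k,j}$ has the $(k+1)$-st vertex of $q_{k+1}$ as a common neighbor of all its vertices, so $val_{G}(q_{k,j}) \ge 1$. Hence on the right-hand side every term $h(q_{k,j})$ equals $1/val_{G}(q_{k,j})$, and the sum over $(k+1)$-cliques becomes exactly
\[
\sum_{q_{k+1}=q_{k,1}\cdots q_{k,k+1} \in \Delta_{k+1}(G)}
\bigg(\frac{1}{val_{G}(q_{k,1})} + \cdots + \frac{1}{val_{G}(q_{k,k+1})}\bigg).
\]
Equating the two sides of (\ref{cliquhand1}) then gives the claimed identity.

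The main obstacle — and really the only point requiring care — is the observation that facets of a $(k+1)$-clique are never isolated $k$-cliques; this is what guarantees $h$ restricted to facets always takes the reciprocal form and no spurious $1$'s leak into the right-hand side. I would state and justify this explicitly. A secondary, cosmetic issue is reconciling notation: the lemma is stated for $k\ge 2$, so for the proposition to make sense for all $k\ge 1$ one should either invoke the $k=1$ case as the ordinary weighted (vertex) handshaking lemma~(\ref{verthand1}) (where a $1$-clique is a vertex, $val_{G}$ of a vertex is its degree, and an isolated $1$-clique is an isolated vertex, so the statement reduces to (\ref{keyIdent1})) or simply restrict to $k\ge 2$ and treat $k=1$ separately as already done. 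Beyond these bookkeeping remarks the proof is a direct substitution, with no inequalities needed at this stage — the geometric–harmonic mean inequality only enters later when deriving the upper bound on $R_{clq}(G;k)$ from this proposition.
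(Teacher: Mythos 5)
Your proof is correct and follows exactly the route the paper intends: the paper states Proposition~\ref{keyprop1} without proof, but it is the direct analogue of identity (\ref{keyIdent1}) in the proof of Theorem~\ref{KeyR1}, obtained by plugging the weight $h(q_k)=1/val_G(q_k)$ (set to $1$ on isolated cliques) into the Weighted Clique Handshaking Lemma. Your explicit justification that facets of a $(k+1)$-clique are never isolated, and your remark on the $k=1$ case, are welcome additions that the paper leaves implicit.
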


Now, we are at the position to state the main result of this paper which a generalized version of the result in \cite{Fajtlowicz22}. Form now one, a graph in which all 
values of $val_{G}(q_{k})$ are the same for $k$-cliques of $G$
is called $k$-\emph{clique regular} graph.

\begin{thm}
	Let $G$ be a graph. Then, we have 
	
	\begin{equation}
	R_{clq}(G;k) \leq \frac{c_{k}(G)}{k+1} . 
	\end{equation}
The equality holds if and only if each component of $G$ is a $k$-clique regular graphs. 	
\end{thm}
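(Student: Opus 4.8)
The plan is to mimic exactly the two-line arguments used for Theorems \ref{KeyR1} and \ref{KeyR2}, now applied one clique-dimension higher. The engine is Proposition \ref{keyprop1} together with the geometric–harmonic mean inequality (Lemma \ref{GHMIneq}) in the case $k+1$: for each $(k+1)$-clique $q_{k+1}=q_{k,1}\cdots q_{k,k+1}$ all of whose facet-values $val_{G}(q_{k,j})$ are positive, we have
\begin{equation*}
\frac{1}{\sqrt[k+1]{\prod_{j=1}^{k+1} val_{G}(q_{k,j})}}
\leq
\frac{1}{k+1}\sum_{j=1}^{k+1}\frac{1}{val_{G}(q_{k,j})}.
\end{equation*}
Summing this over all $q_{k+1}\in\Delta_{k+1}(G)$ and invoking Proposition \ref{keyprop1} gives
\begin{equation*}
R_{clq}(G;k)\leq \frac{1}{k+1}\bigl(c_{k}(G)-c_{k,0}(G)\bigr)\leq \frac{c_{k}(G)}{k+1},
\end{equation*}
which is the claimed bound. (One should remark at the outset that every $(k+1)$-clique contributes a genuine term, i.e. all its facet values are at least $1$, since a common vertex of the $k+1$ facets is any vertex of the clique itself when $k\geq 1$; this is also implicitly what makes Proposition \ref{keyprop1} a sum of well-defined reciprocals.)

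For the equality analysis I would proceed in two stages, exactly tracking the two inequalities above. The second inequality is an equality precisely when $c_{k,0}(G)=0$, i.e. $G$ has no isolated $k$-cliques. The first (term-by-term GHM) inequality, summed, is an equality precisely when it is an equality for every $(k+1)$-clique, and by the equality clause of Lemma \ref{GHMIneq} that happens iff for every $q_{k+1}\in\Delta_{k+1}(G)$ the $k+1$ facet values $val_{G}(q_{k,1}),\dots,val_{G}(q_{k,k+1})$ are all equal. So the bound is attained iff $G$ has no isolated $k$-clique and the function $val_{G}$ is constant on the set of facets of each $(k+1)$-clique. The remaining task is to show this local condition is equivalent to: each component of $G$ is $k$-clique regular (and, strictly, has at least one $(k+1)$-clique, hence a nonempty family being summed — in the fully general statement one may wish to flag the degenerate case $c_{k+1}(G)=0$, where the empty sum gives $R_{clq}(G;k)=0$ and equality forces $c_{k}(G)=0$, so one restricts attention to components carrying a $(k+1)$-clique).

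The equivalence ``constant on facets of each $(k+1)$-clique $\Rightarrow$ $k$-clique regular on each component'' is where the real work lies. The natural device is a connectivity/propagation argument on an auxiliary graph whose vertices are the $k$-cliques of $G$, with two $k$-cliques adjacent when they are both facets of a common $(k+1)$-clique. The equality condition says $val_{G}$ is constant on each edge of this auxiliary graph, hence constant on each of its connected components; one then argues that within a connected component of $G$ (that contains a $(k+1)$-clique) all $k$-cliques lie in a single such auxiliary component. The converse direction is immediate: if $val_{G}$ is globally constant on the $k$-cliques of a component, then a fortiori it is constant on the facets of each $(k+1)$-clique there.

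I expect the connectivity step to be the main obstacle, and in fact to need a hypothesis that the paper has not made explicit: if $k\geq 2$, a component of $G$ can perfectly well be connected as a graph yet have its $k$-cliques split into several classes under the ``share a $(k+1)$-clique'' relation (think of two $K_{k+1}$'s glued along fewer than $k$ vertices), so the stated equality characterization is not literally correct without further qualification — the honest statement is that equality holds iff $val_{G}$ is constant on the facets of every $(k+1)$-clique, and ``$k$-clique regular component'' is only sufficient, not necessary, for $k\geq 2$. For $k=1$ the auxiliary graph is essentially the line graph and the argument does reduce cleanly to connectivity of $G$, recovering Theorem \ref{KeyR2}. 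In the write-up I would therefore either (i) add the standing assumption that makes the propagation go through (e.g. restrict to $k=1$, or assume the relevant clique-hypergraph is connected on each component), or (ii) weaken the equality clause to the intrinsic ``$val_{G}$ constant on facets of each $(k+1)$-clique'' condition and then note that $k$-clique regularity of each component is the natural sufficient condition; this is the cleanest way to keep the theorem true as stated.
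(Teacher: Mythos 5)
Your proof is correct and is exactly the argument the paper intends: the paper's own proof consists of a single sentence deferring to Proposition \ref{keyprop1} and the geometric--harmonic mean inequality, and you have supplied precisely those details in the same way the paper does for Theorems \ref{KeyR1} and \ref{KeyR2}. Your caveat about the equality clause is also well-taken --- the paper offers no justification for the ``each component is $k$-clique regular'' characterization, and your observation that for $k\geq 2$ the honest equality condition is only ``$val_{G}$ constant on the facets of each $(k+1)$-clique'' (with the propagation to whole components failing, e.g., for two copies of $K_{k+1}$ glued along fewer than $k$ vertices) identifies a genuine gap in the statement as printed, not in your argument.
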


\begin{proof}

The proof is straight forward based on Proposition \ref{keyprop1} and 
the arithmetic-geometric mean inequality \ref{GHMIneq}. 
	
\end{proof}

\end{document}